
\documentclass[preprint,11pt]{elsarticle}%
\usepackage{amsmath}
\usepackage{amsfonts}
\usepackage{amssymb}
\usepackage{graphicx}
\usepackage{amssymb}
\usepackage{lineno}%
\setcounter{MaxMatrixCols}{30}
\providecommand{\U}[1]{\protect\rule{.1in}{.1in}}
\newtheorem{lemma}{Lemma}
\newtheorem{theorem}{Theorem}

\newenvironment{proof}[1][Proof]{\noindent\textbf{#1.} }{\ \rule{0.5em}{0.5em}}
\journal{Electronic Journal of Linear Algebra}
\begin{document}
\begin{frontmatter}
\author{Stanis\l aw Bia\l as\fnref{lab1}}
\author{Micha\l \ G\'ora\fnref{lab2}}
\ead{gora@agh.edu.pl}
\title{On the existence of Hurwitz polynomials with no Hadamard factorization}
\address[lab1]{The School of Banking and Management, ul. Armii Krajowej 4, 30-150 Krak\'ow, Poland}
\address[lab2]{AGH University of Science and Technology, Faculty of Applied Mathematics,\\ al. Mickiewicza 30, 30-059 Krak\'{o}w, Poland}
\begin{abstract}
A Hurwitz stable polynomial of degree $n\geq1$ has a Hadamard factorization if
it is a Hadamard product (i.e. element-wise
multiplication) of two Hurwitz stable polynomials of degree $n$. It is known that Hurwitz stable polynomials of degrees less than four have a Hadamard factorization. We
show that for arbitrary $n\geq4$ there exists a Hurwitz stable polynomial of
degree $n$ which does not have a Hadamard factorization.
\end{abstract}
\begin{keyword}
Hadamard factorization, Hadamard product of polynomials, Hurwitz stable polynomials
\MSC 26C10 \sep 30C15 \sep 93D99
\end{keyword}
\end{frontmatter}

\section{Introduction}

A polynomial is said to be (Hurwitz) stable if all its zeros lie in the open
left-half of the complex plane. In the entire class of polynomials, stable
polynomials occupy a special place and stability of polynomials is a classical
topic having both theoretical and applied significance.

In this short note we deal with the Hadamard factorization problem. Recall,
that a Hurwitz stable polynomial of degree $n\geq1$ has (or\textit{\ admits})
a Hadamard factorization if it is a Hadamard product (i.e. element-wise
multiplication) of two Hurwitz stable polynomials of degree $n$. A problem of
the existence of a Hadamard factorization for a given stable polynomial has
been taken by many authors. It is obvious that $\mathcal{F}_{1}^{+}%
=\mathcal{H}_{1}^{+}$ and $\mathcal{F}_{2}^{+}=\mathcal{H}_{2}^{+}$, where
$\mathcal{H}_{n}^{+}$ denotes the entire family of Hurwitz stable polynomials
of degree $n$ with positive coefficients and $\mathcal{F}_{n}^{+}$ is its
subset containing only polynomials admitting a Hadamard factorization.
Moreover, as follows from Garloff and Shrinivasan \cite{GarShr}, we know that
$\mathcal{F}_{3}^{+}=\mathcal{H}_{3}^{+}$ but $\mathcal{F}_{4}^{+}%
\neq\mathcal{H}_{4}^{+}$. Some necessary and necessary and sufficient
conditions for the existence of a Hadamard factorization for a polynomial of
arbitrary degree can be found in Loredo--Villalobos and Aguirre--Hern\'{a}ndez
\cite{LorVil2, LorVil} (unfortunately, their conditions cannot be effectively
applied in practice) and some topological properties of the set $\mathcal{F}%
_{n}^{+}$ (as openness, non-convexity and arc-connectedness) in
Aguirre--Hern\'{a}ndez \textit{et al.} \cite{Aguetal}.

Note also, that there are some issues in which polynomials admitting a
Hadamard factorization play an important role. For example, in \cite{BiaGor}
the authors of this paper have considered the generalized Hadamard product of
polynomials and have shown, among others, that if $f$ is stable and $g$ has a
Hadamard factorization then the generalized Hadamard product of $f$ and $g$ is
stable (in \cite{BiaGor} one can also find a very simple sufficient condition
for the Hadamard factorizability of a polynomial). Taking into account all
these results it seems to be quite surprising that we still do not know if
there exists a polynomial of an arbitrary degree greater than four which does
not admit a Hadamard factorization. We will show that this is the case.

\section{Preliminary results}

In this part, we introduce the basic notation and remind some results which
will be used in the sequel.

\subsection{Basic notations}

We use standard notation: $\mathbb{N}$ and $\mathbb{R}$ stand for the set of
positive integers and real numbers, respectively; $\mathbb{R}^{n\times n}$
stands for the set of real matrices of order $n\times n$; $\mathbb{\pi}%
_{n}^{+}$ denotes the family of $n$--th degree polynomials with positive coefficients.

\subsection{Stable polynomials}

A polynomial $f\in\mathbb{\pi}_{n}^{+}$ $\left(  n\geq1\right)  ,$%
\begin{equation}
f\left(  s\right)  =a_{0}+a_{1}s+\ldots+a_{n-1}s^{n-1}+a_{n}s^{n},
\label{w1.1}%
\end{equation}
is \textit{Hurwitz stable }(or shortly \textit{stable}) if all its zeros have
negative real parts. It is well known (and easy verified) that a necessary
condition for the stability of a real polynomial is that its coefficients are
all of the same sign; without losing generality we will assume in the sequel
that they are positive.

Let $H_{f}\in\mathbb{R}^{n\times n}$ be the Hurwitz matrix associated with
polynomial (\ref{w1.1}), i.e.%
\begin{equation}
H_{f}=%
\begin{pmatrix}
a_{n-1} & a_{n} & 0 & 0 & \ldots & 0\\
a_{n-3} & a_{n-2} & a_{n-1} & a_{n} & \ldots & 0\\
a_{n-5} & a_{n-4} & a_{n-3} & a_{n-2} & \ldots & 0\\
\vdots & \vdots & a_{n-5} & a_{n-4} & \ldots & 0\\
\vdots & \vdots & \vdots & \vdots & \ddots & \vdots\\
0 & 0 & 0 & 0 & \ldots & a_{0}%
\end{pmatrix}
\text{.} \label{w1.2}%
\end{equation}
From among many interesting properties of the Hurwitz matrix we recall the
following one given by Kemperman \cite{Kem} (see Theorem 2 therein).

\begin{theorem}
\label{th1}If $f\in\mathcal{H}_{n}^{+}$ then every square submatrix of the
Hurwitz matrix $H_{f}$ has positive determinant if and only if all its
diagonal elements are positive.
\end{theorem}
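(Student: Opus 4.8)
The plan is to read off the entries of $H_f$ as $a_{n-2i+j}$ in row $i$ and column $j$, with the convention $a_k=0$ for $k<0$ or $k>n$, so that every entry is nonnegative and the only zero entries are those whose index falls outside $\{0,1,\dots,n\}$. Fix a square submatrix $M$ determined by rows $i_1<\cdots<i_k$ and columns $j_1<\cdots<j_k$; its diagonal entries are $a_{n-2i_r+j_r}$ for $r=1,\dots,k$, so that ``all diagonal elements of $M$ are positive'' is exactly the requirement $0\le n-2i_r+j_r\le n$ for every $r$. The statement is then the per-submatrix equivalence $\det M>0 \iff$ every diagonal entry of $M$ is positive (the global reading is vacuous, since the main diagonal of $H_f$ is $a_{n-1},\dots,a_0$, always positive), and I would prove the two implications separately.

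For the implication $\det M>0\Rightarrow$ positive diagonal I would argue by contraposition, using only the staircase zero-pattern of $H_f$. Suppose some diagonal index violates the range, say $n-2i_r+j_r<0$. Because $d_{r',s}:=n-2i_{r'}+j_s$ is nondecreasing in $s$ and nonincreasing in $r'$, every entry in the rows $i_r,\dots,i_k$ and the columns $j_1,\dots,j_r$ has index $\le n-2i_r+j_r<0$ and hence vanishes; this is a zero block of size $(k-r+1)\times r$, whose dimensions sum to $k+1>k$, forcing $\det M=0$. The symmetric case $n-2i_r+j_r>n$ produces a zero block in the rows $i_1,\dots,i_r$ and columns $j_r,\dots,j_k$, again of total size $k+1$, so $\det M=0$ as well. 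Thus an off-range (hence zero) diagonal entry always forces a vanishing determinant, which is the desired contrapositive. This direction uses nothing about stability beyond the shape of $H_f$.

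The reverse implication is where stability enters and is the hard part. Here I would first invoke the classical fact that for Hurwitz stable $f\in\mathcal{H}_n^+$ the matrix $H_f$ is totally nonnegative (Asner; equivalently, a consequence of the Hermite--Biehler interlacing together with the theory of P\'{o}lya frequency sequences), so that $\det M\ge0$ automatically and only $\det M\ne0$ remains. The delicate point is that total nonnegativity alone does not suffice: a totally nonnegative matrix with positive diagonal can be singular (e.g.\ the all-ones $2\times2$ matrix), so the specific banded structure of $H_f$ must be exploited, and a plain Laplace expansion is useless because of sign cancellation. To remove the cancellation I would pass to a Lindstr\"{o}m--Gessel--Viennot realization of $H_f$ as the weighted path matrix of a planar acyclic network arising from a bidiagonal (elementary Jacobi) factorization; then $\det M$ is a cancellation-free sum over families of vertex-disjoint paths joining the sources $i_1,\dots,i_k$ to the sinks $j_1,\dots,j_k$, and $\det M>0$ is equivalent to the existence of at least one such family. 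The main obstacle, which I expect to absorb most of the work, is to show that the diagonal-positivity condition $0\le n-2i_r+j_r\le n$ guarantees that the diagonal routing (each source $i_r$ to its own sink $j_r$) can be carried out by vertex-disjoint paths through the band, so that the corresponding term survives. As an alternative I would consider induction on $n$ via one step of the Routh--Hurwitz reduction, which replaces $f$ by a stable polynomial of degree $n-1$ and relates $H_f$ to the smaller Hurwitz matrix through sign-coherent row operations preserving both total nonnegativity and the zero-pattern; the difficulty there is the bookkeeping needed to propagate \emph{strict} positivity, rather than mere nonnegativity, through the reduction.
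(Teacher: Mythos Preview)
The paper does not give its own proof of this statement: Theorem~\ref{th1} is simply quoted from Kemperman \cite{Kem} (Theorem~2 there) as a known result, so there is no in-paper argument to compare your proposal against.

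That said, your outline is sound and in fact close in spirit to Kemperman's actual treatment. Your contrapositive for the easy direction is correct: the monotonicity of the index $n-2i_{r'}+j_s$ in each variable yields, from a single off-range diagonal index, a zero block of size $(k-r+1)\times r$ or $r\times(k-r+1)$, and in either case the dimensions sum to $k+1>k$, forcing $\det M=0$. For the hard direction you correctly identify that mere total nonnegativity (Asner) is not enough and that the banded structure must be exploited to upgrade $\det M\ge0$ to $\det M>0$. Kemperman's argument is precisely along these lines: he establishes total positivity of $H_f$ in the strong sense needed here, using the Hurwitz factorization/reduction and the specific zero pattern of $H_f$, which is essentially your ``alternative'' route via the Routh--Hurwitz step. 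Your Lindstr\"om--Gessel--Viennot idea is a natural modern repackaging, and the obstacle you flag --- showing that the in-range diagonal condition guarantees at least one vertex-disjoint routing through the band --- is exactly the substantive point; it can be done, but it is real work, and you are right not to claim it as trivial.

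In short: there is no paper proof to grade against, but your proposal is a credible plan whose hard half would, if carried out, amount to reproving Kemperman's theorem.
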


Note that as long as $n\geq4$, the matrix%
\begin{equation}
\left(
\begin{array}
[c]{ccc}%
a_{3} & a_{4} & 0\\
a_{1} & a_{2} & a_{n-1}\\
0 & a_{0} & a_{n-3}%
\end{array}
\right)  \label{w1.3}%
\end{equation}
is a submatrix of matrix (\ref{w1.2}). Thus, in view of Theorem \ref{th1}, it
allows us to conclude that if polynomial (\ref{w1.1}) of degree $n\geq4$ is
stable then%
\begin{equation}
a_{3}a_{2}-a_{1}a_{4}>0,\text{\qquad}a_{n-3}a_{2}-a_{n-1}a_{0}>0 \label{w1.4}%
\end{equation}
and
\begin{equation}
a_{n-3}a_{3}a_{2}-a_{n-3}a_{4}a_{1}-a_{n-1}a_{3}a_{0}>0\text{.} \label{w1.5}%
\end{equation}
These inequalities will be crucial in our further considerations.

\subsection{Polynomials admitting a Hadamard factorization}

Together with polynomial $f$ of the form (\ref{w1.1}) we will consider a
polynomial $g\in\mathbb{\pi}_{n}^{+}$,%
\begin{equation}
g\left(  s\right)  =b_{0}+b_{1}s+\ldots+b_{n-1}s^{n-1}+b_{n}s^{n}\text{,}
\label{w1.6}%
\end{equation}
their Hadamard product $f\circ g\in\mathbb{\pi}_{n}^{+}$ defined as an
element-wise multiplication, i.e.%
\[
\left(  f\circ g\right)  \left(  s\right)  =a_{0}b_{0}+a_{1}b_{1}%
s+\ldots+a_{n-1}b_{n-1}s^{n-1}+a_{n}b_{n}s^{n}%
\]
and their Hadamard quotient $f\diamond g\in\mathbb{\pi}_{n}^{+}$ defined as an
element-wise division, i.e.%
\[
\left(  f\diamond g\right)  \left(  s\right)  =\frac{a_{0}}{b_{0}}+\frac
{a_{1}}{b_{1}}s+\ldots+\frac{a_{n-1}}{b_{n-1}}s^{n-1}+\frac{a_{n}}{b_{n}}%
s^{n}\text{.}%
\]
Following Garloff and Wagner \cite{GarWag}, we say that the polynomial
$f\in\mathcal{H}_{n}^{+}$ has a Hadamard factorization if there exist two
polynomials $f_{1},f_{2}\in\mathcal{H}_{n}^{+}$ such that $f=f_{1}\circ f_{2}%
$. Equivalently, the polynomial $f\in\mathcal{H}_{n}^{+}$ has a Hadamard
factorization if there exists a polynomial $g\in\mathcal{H}_{n}^{+}$ such that
$f\diamond g\in\mathcal{H}_{n}^{+}$.

\section{Main result}

Let $f\in\mathbb{\pi}_{n}^{+}$ be as in (\ref{w1.1}) and let $\delta
_{1}\left(  f\right)  $ and $\delta_{2}\left(  f\right)  $ be two positive
numbers given by%
\begin{equation}
\delta_{1}\left(  f\right)  =\frac{a_{1}a_{4}}{a_{3}a_{2}},\text{\qquad}%
\delta_{2}\left(  f\right)  =\frac{a_{n-1}a_{0}}{a_{n-3}a_{2}}\text{.}
\label{w2.1}%
\end{equation}
We start with two simple observations.

\begin{lemma}
\label{lem1}For every $n\geq4$ and for the polynomial $f\in\mathcal{H}_{n}%
^{+}$ we have%
\begin{equation}
\delta_{1}\left(  f\right)  <1,\text{\qquad}\delta_{2}\left(  f\right)  <1
\label{w2.2}%
\end{equation}
and%
\begin{equation}
\delta_{1}\left(  f\right)  +\delta_{2}\left(  f\right)  <1. \label{w2.3}%
\end{equation}

\end{lemma}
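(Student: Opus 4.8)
The plan is to read the two numbered inequalities \eqref{w2.2} and \eqref{w2.3} straight off the consequences of stability already recorded in \eqref{w1.4} and \eqref{w1.5}. Indeed, for $f\in\mathcal{H}_n^+$ with $n\geq4$ every coefficient $a_0,\dots,a_n$ is positive, so all the denominators appearing in \eqref{w2.1} are positive and $\delta_1(f),\delta_2(f)$ are genuinely well-defined positive numbers. The first inequality in \eqref{w1.4}, namely $a_3a_2-a_1a_4>0$, rearranges at once to $a_1a_4<a_3a_2$, i.e. $\delta_1(f)=\tfrac{a_1a_4}{a_3a_2}<1$; the second inequality in \eqref{w1.4}, $a_{n-3}a_2-a_{n-1}a_0>0$, rearranges to $a_{n-1}a_0<a_{n-3}a_2$, i.e. $\delta_2(f)=\tfrac{a_{n-1}a_0}{a_{n-3}a_2}<1$. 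That disposes of \eqref{w2.2}.

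For \eqref{w2.3} I would start from \eqref{w1.5}, $a_{n-3}a_3a_2-a_{n-3}a_4a_1-a_{n-1}a_3a_0>0$. The idea is to divide this inequality through by the positive quantity $a_{n-3}a_3a_2$, which turns it into
\begin{equation*}
1-\frac{a_4a_1}{a_3a_2}-\frac{a_{n-1}a_0}{a_{n-3}a_2}>0,
\end{equation*}
and the two subtracted terms are precisely $\delta_1(f)$ and $\delta_2(f)$; hence $\delta_1(f)+\delta_2(f)<1$, which is \eqref{w2.3}. I should double-check the bookkeeping in the special case $n=4$, where the indices collide ($a_{n-1}=a_3$, $a_{n-3}=a_1$): there \eqref{w1.5} becomes $a_1a_3a_2-a_1a_4a_1-a_3^2a_0>0$, division by $a_1a_3a_2>0$ again yields $1-\delta_1(f)-\delta_2(f)>0$, so the argument is uniform in $n$. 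It is also worth noting that \eqref{w2.3} is strictly stronger than \eqref{w2.2} given positivity of both $\delta_i$, so strictly speaking \eqref{w2.2} follows from \eqref{w2.3}; nevertheless I would keep the separate derivation from \eqref{w1.4} since it is immediate and both facts are cited later.

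There is essentially no obstacle here: the lemma is a direct algebraic corollary of the submatrix-positivity facts \eqref{w1.4}--\eqref{w1.5}, which in turn come from applying Kemperman's Theorem~\ref{th1} to the $3\times3$ submatrix \eqref{w1.3}. The only points requiring a word of care are (i) confirming that \eqref{w1.3} really is a submatrix of $H_f$ for every $n\geq4$ (already asserted in the text) so that \eqref{w1.4}--\eqref{w1.5} are available, and (ii) checking that the quantity one divides by is positive in each step, which holds because all $a_i>0$. So the proof is a two-line rearrangement, and I would present it as such.
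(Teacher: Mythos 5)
Your proof is correct and is exactly the argument the paper intends: the authors simply state that \eqref{w2.2} and \eqref{w2.3} are equivalent to the stability consequences \eqref{w1.4} and \eqref{w1.5}, and your divisions by $a_3a_2$, $a_{n-3}a_2$ and $a_{n-3}a_3a_2$ make that equivalence explicit. Nothing further is needed.
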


\begin{proof}
The above inequalities follow from the stability of $f$:
conditions~(\ref{w2.2}) and (\ref{w2.3}) are equivalent to (\ref{w1.4}) and
(\ref{w1.5}), respectively.
\end{proof}

\begin{lemma}
\label{lem2}If for $f,g\in\mathbb{\pi}_{n}^{+}$ we have $f\diamond
g\in\mathcal{H}_{n}^{+}$, then%
\[
\delta_{1}\left(  f\right)  <\delta_{1}\left(  g\right)  \quad\text{and}%
\quad\delta_{2}\left(  f\right)  <\delta_{2}\left(  g\right)  .
\]

\end{lemma}

\begin{proof}
It is easy to see that%
\[
\delta_{1}\left(  f\diamond g\right)  =\frac{\delta_{1}\left(  f\right)
}{\delta_{1}\left(  g\right)  }\text{\quad and\quad}\delta_{2}\left(
f\diamond g\right)  =\frac{\delta_{2}\left(  f\right)  }{\delta_{2}\left(
g\right)  }\text{.}%
\]
Then, the thesis is a simple consequence of Lemma \ref{lem1}.
\end{proof}

We are now ready to prove the main result of this work.

\begin{theorem}
For every $n\geq4$ there exists a stable polynomial of degree $n$ that does
not have a Hadamard factorization.
\end{theorem}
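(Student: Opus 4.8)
The plan is to exhibit, for every $n\geq4$, an explicit polynomial in $\mathcal{H}_{n}^{+}$ and then show by contradiction that it admits no Hadamard factorization, using Lemmas~\ref{lem1} and~\ref{lem2} as the decisive tools. The quantities $\delta_{1}$ and $\delta_{2}$ depend only on the coefficients $a_{0},a_{1},a_{2},a_{3},a_{4},a_{n-3},a_{n-1}$ (with the obvious coalescing of indices when $n=4$), so the strategy is to engineer a stable $f$ whose $\delta_{1}(f)$ and $\delta_{2}(f)$ are both so close to the forbidden boundary that \emph{no} stable $g$ can have $\delta_{1}(g)$ and $\delta_{2}(g)$ simultaneously larger. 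Concretely, suppose $f=f_{1}\circ f_{2}$ with $f_{1},f_{2}\in\mathcal{H}_{n}^{+}$; writing $g=f_{2}$, we have $f\diamond g=f_{1}\in\mathcal{H}_{n}^{+}$, so by Lemma~\ref{lem2} we get $\delta_{1}(f)<\delta_{1}(g)$ and $\delta_{2}(f)<\delta_{2}(g)$. But Lemma~\ref{lem1} applied to the stable polynomial $g$ forces $\delta_{1}(g)+\delta_{2}(g)<1$. Hence a \emph{necessary} condition for $f$ to have a Hadamard factorization is
\begin{equation}
\delta_{1}(f)+\delta_{2}(f)<1\quad\text{with room to spare, i.e. }\exists\,\delta_{1}(g),\delta_{2}(g)\text{ stable-admissible and strictly larger.}
\label{plan-nec}
\end{equation}
The cleanest sufficient obstruction is therefore to build $f\in\mathcal{H}_{n}^{+}$ with $\delta_{1}(f)+\delta_{2}(f)\geq 1$; then no valid $g$ can exist, because $\delta_{1}(g)+\delta_{2}(g)>\delta_{1}(f)+\delta_{2}(f)\geq1$ would contradict~(\ref{w2.3}) for $g$.

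Thus the whole problem reduces to a single question: for each $n\geq4$, does there exist a Hurwitz stable polynomial of degree $n$ with positive coefficients such that $\delta_{1}(f)+\delta_{2}(f)\geq1$? I would look for $f$ in a convenient low-parameter family. A natural first attempt is a product of a carefully chosen quartic (or a quartic times a very stable ``tail'' like $(s+c)^{n-4}$ with $c$ large) so that $a_{0},\dots,a_{4}$ carry the delicate part while the higher coefficients stay safely in the stable region; one then computes $a_{n-3}$ and $a_{n-1}$ from the product and checks the inequality $\delta_{1}(f)+\delta_{2}(f)\geq1$ numerically or symbolically. Alternatively, and perhaps more transparently, I would try a scaled/reciprocal-symmetric construction: pick $f$ so that its coefficient sequence is palindromic-like, forcing $\delta_{1}(f)=\delta_{2}(f)$, and then push that common value up to (or past) $\tfrac12$. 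The key algebraic fact to verify is that one can do this while keeping \emph{all} the Hurwitz determinants positive — i.e. $f$ genuinely stable — which is exactly where the constraints~(\ref{w1.4})--(\ref{w1.5}) bite and why $\delta_{1}(f)+\delta_{2}(f)$ cannot be pushed arbitrarily high but, crucially, \emph{can} be pushed to at least $1$.

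The main obstacle I anticipate is the simultaneous bookkeeping of stability and the target inequality as $n$ grows: conditions~(\ref{w2.2}) say each of $\delta_{1}(f),\delta_{2}(f)$ is individually below $1$, and~(\ref{w2.3}) says their sum is below $1$ \emph{for a stable} $f$ — so to get $\delta_{1}(f)+\delta_{2}(f)\geq1$ I must \emph{not} make $f$ stable in a naive way; rather I need $\delta_{1}(f)+\delta_{2}(f)\geq1$ to be compatible with stability only because the hypothetical factor $g$ would additionally need $\delta_{1}(g),\delta_{2}(g)$ strictly \emph{above} those of $f$, which~(\ref{w2.3}) for $g$ forbids. Wait — this shows the right formulation: $f$ itself must satisfy~(\ref{w2.3}), so $\delta_{1}(f)+\delta_{2}(f)<1$ always; the obstruction must instead be that the \emph{interval of room} between $\delta_i(f)$ and the stability ceiling for $\delta_i(g)$ is too small to fit a genuine stable $g$. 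So the real task is to find stable $f$ with $\delta_{1}(f),\delta_{2}(f)$ so close to $1$ (say each exceeding, or their sum exceeding, the maximal value $\delta_{1}(g)+\delta_{2}(g)$ can attain over stable $g$ of degree $n$) that no admissible $g$ remains. Pinning down that maximal value — equivalently, proving a sharp upper bound on $\delta_{1}(g)+\delta_{2}(g)$ better than the trivial $<1$ from~(\ref{w2.3}), and exhibiting $f$ beating it — is the crux; I expect the authors handle it via an explicit one-parameter family of $f$'s (likely built from $(s+1)^{n}$ perturbed in a few coefficients, or a quartic-times-$(s+c)^{n-4}$ construction) together with the elementary inequalities~(\ref{w1.4})--(\ref{w1.5}) to certify stability, and I would follow the same route.
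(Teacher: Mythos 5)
Your proposal does not reach a proof, and its central quantitative idea is not the one that can work. You first aim to exhibit a stable $f$ with $\delta_{1}(f)+\delta_{2}(f)\geq 1$, then correctly notice this is impossible (Lemma~\ref{lem1} forces $\delta_{1}(f)+\delta_{2}(f)<1$ for every stable $f$), and you fall back on the vaguer goal of finding a stable $f$ whose $\delta$'s leave ``too little room'' for a stable $g$ with $\delta_{i}(g)>\delta_{i}(f)$. But that fallback is still misdirected: since $f$ itself ranges over the same class as $g$, no stable $f$ can have $\delta_{1}(f)+\delta_{2}(f)$ exceeding the supremum of $\delta_{1}(g)+\delta_{2}(g)$ over stable $g$, so ``beating the maximal value'' is not an available obstruction. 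The constraint you are not exploiting is the \emph{multiplicative} identity $\delta_{i}(f\diamond g)=\delta_{i}(f)/\delta_{i}(g)$ from the proof of Lemma~\ref{lem2}: if $f$ had a factorization $f\diamond g\in\mathcal{H}_{n}^{+}$, then Lemma~\ref{lem1} applied to $f\diamond g$ (not merely to $g$) gives $\delta_{1}(f)/\delta_{1}(g)+\delta_{2}(f)/\delta_{2}(g)<1$, which is far stronger than $\delta_{i}(g)>\delta_{i}(f)$. Even granting your framework, you never produce the explicit polynomial for general $n$, never certify its stability, and never prove the needed upper bound on admissible $g$'s; the proposal ends with ``I expect the authors handle it via \dots'' which is a plan, not an argument.

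The paper's actual proof is entirely different and non-constructive. It assumes every stable polynomial of degree $n$ factors, iterates the factorization to build a chain $g_{0},g_{1},g_{2},\dots$ in $\mathcal{H}_{n}^{+}$ with each $g_{i}\diamond g_{i+1}$ stable, observes via Lemma~\ref{lem2} that $\{\delta_{1}(g_{i})\}$ and $\{\delta_{2}(g_{i})\}$ are increasing, bounded in $(0,1)$ by Lemma~\ref{lem1}, hence convergent with positive limits, so that $\delta_{1}(g_{i})/\delta_{1}(g_{i+1})+\delta_{2}(g_{i})/\delta_{2}(g_{i+1})\to 2$; yet this same sum equals $\delta_{1}(g_{i}\diamond g_{i+1})+\delta_{2}(g_{i}\diamond g_{i+1})<1$ for every $i$ by Lemma~\ref{lem1} applied to the stable quotient --- a contradiction. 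If you want to salvage your constructive route, the right single-polynomial criterion would be: if no $x,y>0$ with $x+y<1$ satisfy $\delta_{1}(f)/x+\delta_{2}(f)/y<1$ --- which by optimizing over $x+y=1$ amounts to $\sqrt{\delta_{1}(f)}+\sqrt{\delta_{2}(f)}\geq 1$ --- then $f$ has no Hadamard factorization; but you would still owe, for every $n\geq4$, an explicit stable polynomial meeting that inequality, which is precisely the work your proposal defers.
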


\begin{proof}
Fix any $n\geq4$ and suppose, by contradiction, that every stable polynomial
of degree $n$ admits a Hadamard factorization. It implies that for an
arbitrary $g_{0}\in\mathcal{H}_{n}^{+}$, there exists $g_{1}\in\mathcal{H}%
_{n}^{+}$ such that $g_{0}\diamond g_{1}\in\mathcal{H}_{n}^{+}$. Repeating
this reasoning one can show that for $g_{i}\in\mathcal{H}_{n}^{+}$ there
exists $g_{i+1}\in\mathcal{H}_{n}^{+}$ such that $g_{i}\diamond g_{i+1}%
\in\mathcal{H}_{n}^{+}$. In this way we obtain a sequence of stable
polynomials $\left\{  g_{i}\right\}  $ which generates two sequences of
positive numbers $\left\{  \delta_{1}\left(  g_{i}\right)  \right\}  $ and
$\left\{  \delta_{2}\left(  g_{i}\right)  \right\}  $ (as in (\ref{w2.1})). It
follows from Lemma \ref{lem2} that sequences $\left\{  \delta_{1}\left(
g_{i}\right)  \right\}  $ and $\left\{  \delta_{2}\left(  g_{i}\right)
\right\}  $ are increasing. Moreover, by Lemma \ref{lem1}, they are bounded
from above and thus convergent. They are also bounded from below by zero and
thus their limits are positive. It means that%
\begin{equation}
\frac{\delta_{1}\left(  g_{i}\right)  }{\delta_{1}\left(  g_{i+1}\right)
}+\frac{\delta_{2}\left(  g_{i}\right)  }{\delta_{2}\left(  g_{i+1}\right)
}\rightarrow2\label{w2.4}%
\end{equation}
as $i\rightarrow\infty$. On the other hand, according to Lemma \ref{lem1} it
follows from the stability of $g_{i}\diamond g_{i+1}$ that
\[
\frac{\delta_{1}\left(  g_{i}\right)  }{\delta_{1}\left(  g_{i+1}\right)
}+\frac{\delta_{2}\left(  g_{i}\right)  }{\delta_{2}\left(  g_{i+1}\right)
}=\delta_{1}\left(  g_{i}\diamond g_{i+1}\right)  +\delta_{2}\left(
g_{i}\diamond g_{i+1}\right)  <1\text{,}%
\]
for every $i\in\mathbb{N}$. This contradicts to (\ref{w2.4}) completing the proof.
\end{proof}

Garloff and Shrinivasan proved in \cite{GarShr} that there exists a polynomial
of degree $4$ which is not a Hadamard product of stable polynomials of degree
$4$. The above theorem shows that similar examples can be constructed for
every $n\geq4$ (our proof is, unfortunately, not constructive).

\section*{Acknowledgments}

The research work of the second author was partially supported by the Faculty
of Applied Mathematics AGH UST statutory tasks (grant no. 11.11.420.004)
within subsidy of Ministry of Science and Higher Education.

\end{document}